\theoremstyle{plain}
\newtheorem{Theorem}{Theorem}%[section]
\newtheorem{Lemma}{Lemma}
\newtheorem*{Theorem*}{Theorem}
\theoremstyle{definition}
\theoremstyle{remark}
\newtheorem{Remark}[Lemma]{Remark}
\newcommand{\R}{\mathbb{R}}     %real numbers
\newcommand{\HH}{\mathbb{H}}
\newcommand{\W}{\bb W}
\newcommand{\V}{\bb V}
\newcommand{\G}{\bb G}
\newcommand{\bb}[1]{\mathbb{#1}}
\newcommand{\hel}
{
\hskip2.5pt{\vrule height6pt width.5pt depth0pt}
\hskip-.2pt\vbox{\hrule height.5pt width6pt depth0pt}
\,
}
\title{Nowhere differentiable intrinsic Lipschitz graphs}
\author[Julia]{Antoine Julia}
	\address[A.~Julia]{D\'epartment de Math\'ematiques d'Orsay, Universit\'e Paris-Saclay, 91405, Orsay, France}
	\email{antoine.julia@u-psud.fr}
\author[Nicolussi~Golo]{Sebastiano Nicolussi Golo}
	\address[Nicolussi Golo]{Department of Mathematics and Statistics, 40014 University of Jyväskylä, Finland}	\email{sebastiano2.72@gmail.com}
\author[Vittone]{Davide Vittone}
	\address[D.~Vittone]{Dipartimento di Matematica ``T.Levi-Civita'', via Trieste 63, 35121 Padova, Italy.}
	\email{vittone@math.unipd.it}
\thanks{A.J.~has been supported by the Simons Foundation Wave Project.  S.N.G.~has been supported by the Academy of Finland (grant 322898 ``Sub-Riemannian Geometry via  Metric-geometry and Lie-group Theory''). D.V.~has been supported by  FFABR 2017 of MIUR (Italy) and by GNAMPA of INdAM (Italy). All three authors have been supported by the University of Padova STARS Project ``Sub-Riemannian Geometry and Geometric Measure Theory Issues: Old and New''.}
\subjclass[2010]{%
	53C17, % sub-Riemannian geometry
%	49Q20, % Variational problems in a geometric measure-theoretic setting
%	35R03, % Partial differential equations on Heisenberg groups, Lie groups, Carnot groups, etc
%	49Q15, % Geometric measure and integration theory, integral and normal currents
	58C20, % Differentiation theory (Gateaux, Fréchet, etc.) 
%	58C35 % Integration on manifolds; measures on manifolds
%	28A75, % Length, area, volume, other geometric measure theory
%	58A30, % Vector distributions (subbundles of the tangent bundles)
	22E25% Nilpotent and solvable Lie groups
	}
\keywords{%
	Sub-Riemannian Geometry, %
	Carnot Groups, %
	Intrinsic Lipschitz graphs.
	}
\date{\today}
\begin{document}
\begin{abstract}
 We construct  intrinsic Lipschitz graphs in Carnot groups with the property that, at every point, there exist infinitely many different blow-up limits, none of which is a homogeneous subgroup. This provides counterexamples to a Rademacher theorem for intrinsic Lipschitz graphs.
\end{abstract}
\maketitle
%\tableofcontents

%%%%%%%%%%%%%%%%%%%%%%%%%%%%%%%%%%%%%%%%%%%%%%%%%%%%%%%%%%%%%%%
%\section*{Introduction}
The notion of Lipschitz submanifolds in sub-Riemannian geometry was introduced, at least in the setting of Carnot groups, by B.~Franchi, R.~Serapioni and F.~Serra Cassano in a series of seminal papers~\cite{FSSCJNCA,MR2836591,FranchiSerapioni2016Intrinsic} through the theory of {\em intrinsic Lipschitz graphs}. One of the main open questions  concerns the differentiability properties for such graphs: in this paper we provide  examples of intrinsic Lipschitz graphs of codimension 2 (or higher) that are nowhere differentiable, i.e., that admit no homogeneous tangent subgroup at any point.

\medskip

Recall that a Carnot group $\G$ is a connected, simply connected and nilpotent Lie group whose Lie algebra is stratified, i.e., it can be decomposed as the direct sum $\oplus_{j=1}^s V_j$ of subspaces such that
\[
V_{j+1}=[V_1,V_j]\text{ for every }j=1,\dots,s-1,\qquad [V_1,V_s]=\{0\},\qquad V_s\neq\{0\}.
\]
We shall identify the group $\G$ with its Lie algebra via the exponential map $\exp:\oplus_{j=1}^s V_j\to\G$, which is a diffeomorphism. In this way, for $\lambda>0$ one can introduce the homogeneous dilations $\delta_\lambda:\G\to\G$ as the group automorphisms defined by $\delta_\lambda(p)=\lambda^j p$ for every $p\in V_j$. A subgroup of $\G$ is said to be homogeneous if it is dilation-invariant. Assume that a splitting $\G=\W\V$ of $\G$  as the product of homogeneous and complementary   (i.e., such that $\W\cap\V=\{0\}$) subgroups is fixed; we say that a function $\phi:\bb W\to\bb V$ \emph{intrinsic Lipschitz} if there is an open nonempty cone $U$ such that $\V\setminus\{0\}\subset U$ and
\[
pU\cap\Gamma_\phi=\emptyset\qquad\text{for all $p\in\Gamma_\phi$,}
\]
where  $\Gamma_\phi=\{w\phi(w):w\in\bb W\}$ is the intrinsic graph of $\phi$.
We say that a set $\Sigma\subset\G$ is a {\em blow-up} of $\Gamma_\phi$ at $\hat p=\hat w\phi(\hat w)$ if there exists a sequence $(\lambda_n)_n$ such that $\lambda_n\to+\infty$ and the limit
\[
\lim_{n\to\infty} \delta_{\lambda_n}( \hat p^{-1}\Gamma_\phi)=\Sigma
\]
holds with respect to the local Hausdorff convergence. It is worth recalling that, if $\phi$ is intrinsic Lipschitz, then every blow-up  is automatically the intrinsic Lipschitz graph of a map $\W\to\V$. Eventually, we say that $\phi$ is \emph{intrinsically differentiable} at $\hat w\in\bb W$ if the blow-up of $\Gamma_\phi$ at $\hat p=\hat w\phi(\hat w)$ is unique and it is a homogeneous subgroup of $\bb G$. See~\cite{2020arXiv200402520J} for details.

\medskip

We say that a group $\bb G$ along with a splitting $\bb W \bb V$ satisfies an \emph{intrinsic Ra\-de\-ma\-cher Theorem}  if all intrinsic Lipschitz maps $\phi : \bb W\to \bb V$ are intrinsically differentiable almost everywhere (that is, for almost all points of $\bb W$ equipped with its Haar measure). It was proved in \cite{MR2836591} that this is the case when $\bb V \simeq \R$ and $\bb G$ is  of step two; other partial results for graphs with codimension 1 ($\V\simeq\R$) are contained in~\cite{FrMaSe2014Differentiability} and~\cite{LDM_semigenerated}.
% (or, more generally, of type $\star$, see~\cite{FrMaSe2014Differentiability}, \rosso{or of type $\lozenge$ or of step $3$ without  Engel type algebras as a quotient, see~\cite{LDM_semigenerated}}\Anote{do you agree to cite Le Donne Moisala like this? They only prove that finite perimeter implies intrinsic $C^1$ rectifiable, but I guess that for a Lipschitz graph it's the same as being a.e.~differentiable, no?}). 
If $\bb V$ is a normal subgroup, the Rademacher Theorem has been proved for general $\bb G$ by G. Antonelli and A. Merlo in \cite{2020arXiv200602782A}. Recently,  the third named author~\cite{2020arXiv200714286V} proved that Heisenberg groups (with any splitting) satisfy an intrinsic Rademacher Theorem.
The question has been open for a long time if $\bb G$ is the Engel group (which has step 3) and $\bb V \simeq \R$ (see \cite{MR2496564}). 
In this paper we prove a result in the negative direction: namely,  we provide  examples of  intrinsic Lipschitz graphs that are nowhere intrinsically differentiable. Let us state our main result:

\begin{Theorem}\label{thm:general}
	Let $\G$ be a Carnot group with stratification $\bigoplus_{j=1}^s V_j$.
	Let $\W\V$ be a splitting of $\G$ such that $\W\cap V_2\not\subset[\W,\W]$ and there exists $v_0\in\V\cap V_1$  such that $v_0\neq 0$ and $[v_0,\W]=0$.
	Then there is an intrinsic Lipschitz function $\phi:\W\to\V$ that is nowhere intrinsically differentiable.

Moreover, $\phi$ can be constructed in such a way that, for every $p\in\Gamma_\phi$, the following properties hold:
\begin{itemize}
\item[(a)] there exist infinitely many different blow-ups of $\Gamma_\phi$ at $p$,
\item[(b)] no blow-up of $\Gamma_\phi$ at $p$ is a homogeneous subgroup.
\end{itemize}
      \end{Theorem}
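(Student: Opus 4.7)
Proof proposal.

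The plan is to reduce the construction to that of a classical Weierstrass-type scalar function on the line, with the two algebraic hypotheses translating respectively into a one-dimensional parametrization of the graph and into the transfer of the $1/2$-H\"older estimate to the intrinsic Lipschitz property.

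First, using the hypotheses, I pick $w_0 \in (\W \cap V_2) \setminus [\W,\W]$ and $v_0 \in (\V \cap V_1)\setminus\{0\}$ with $[v_0,\W]=0$. Splitting $\W = \W_0 \oplus \R w_0$ for some complement $\W_0$, I parametrize $w = w' + t\, w_0$ with $w' \in \W_0$ and $t \in \R$. The ansatz is
\[
\phi(w' + t\, w_0) := f(t)\, v_0,
\]
for a scalar function $f:\R\to\R$ to be chosen, depending only on the $V_2$-component of $w$ along $w_0$. The assumption $[v_0,\W]=0$ is essential so that the Baker--Campbell--Hausdorff expansion of $(w\phi(w))^{-1}(\tilde w\phi(\tilde w))$ splits: its $v_0$-component is exactly $f(\tilde t) - f(t)$ and its $\W$-component is $w^{-1}\tilde w$, with no crossed correction terms.

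Next I would show that with this structure $\phi$ is intrinsic Lipschitz if and only if $f$ satisfies a uniform estimate $|f(\tilde t)-f(t)| \leq C\, d_\W(w,\tilde w)$. Because $w_0 \in V_2 \setminus [\W,\W]$, the Carnot--Carath\'eodory pseudo-distance on $\W$ along the pure $w_0$-direction is comparable to $|t-\tilde t|^{1/2}$. Thus the cone condition reduces to $f$ being globally $1/2$-H\"older on $\R$. I would then take $f$ to be a Weierstrass-type series
\[
f(t) := \sum_{k=0}^{\infty} 2^{-k}\, g(4^{k} t),
\]
where $g:\R\to\R$ is a smooth $2\pi$-periodic function (for instance $g(s) = \cos s$). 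Standard estimates give that $f$ is $1/2$-H\"older, hence $\phi$ is intrinsic Lipschitz.

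For the blow-ups at $\hat p = \hat w \phi(\hat w)$ with $\hat w = \hat w' + \hat t\, w_0$, the same BCH computation together with the fact that $v_0 \in V_1$ dilates linearly and $w_0 \in V_2$ dilates quadratically gives that $\delta_\lambda(\hat p^{-1}\Gamma_\phi)$ is the intrinsic graph of a map $\phi_{\hat p,\lambda}:\W\to\V$ satisfying
\[
\phi_{\hat p,\lambda}(w' + t\, w_0) = \lambda\bigl( f(\hat t + \lambda^{-2} t) - f(\hat t)\bigr)\, v_0 + \text{lower-order terms in $w'$}.
\]
Any blow-up along $\lambda_n\to\infty$ is therefore, on the $w_0$-line, the graph of a limit of rescaled increments of $f$ at $\hat t$. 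For the lacunary Weierstrass series above, it is standard that along subsequences $\lambda_n = 2^{k_n}$ these rescalings converge to infinitely many distinct non-affine $1/2$-H\"older profiles. A homogeneous subgroup would correspond to a profile $f_\infty$ with the homogeneity $f_\infty(\mu^2 s) = \mu\, f_\infty(s)$, i.e. $f_\infty(s) = c\, \mathrm{sgn}(s)\sqrt{|s|}$; I would rule this out via a direct Fourier/frequency argument on $g$. This simultaneously delivers (a) and (b).

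The main obstacle I anticipate is the cone-avoidance computation in higher step: the reduction of intrinsic Lipschitzness to a $1/2$-H\"older estimate on $f$ relies on delicate cancellations coming from $[v_0,\W]=0$ combined with the position of $w_0$ modulo $[\W,\W]$, and one must verify that none of the higher BCH correction terms destroy this cancellation. A secondary difficulty is the blow-up analysis: identifying which sequences $\lambda_n\to\infty$ yield convergent rescalings, producing infinitely many \emph{distinct} limits, and excluding the square-root homogeneous profile for a generic $g$.
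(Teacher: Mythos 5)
Your overall architecture matches the paper's: reduce everything to a scalar $1/2$-H\"older function of a single degree-two coordinate in the $w_0$-direction, and use $[v_0,\W]=0$ so that the graph map is additive and the blow-ups are computed by rescaling increments of that scalar function. Two remarks on the algebra. First, for the BCH splitting and the blow-up formula to work you need the coordinate $w\mapsto t$ to be a group homomorphism $\W\to\R$, homogeneous of degree $2$; this forces you to choose the functional dual to $w_0$ so that it kills $[\W,\W]$ and all $V_j\cap\W$ with $j\neq 2$ (possible exactly because $w_0\notin[\W,\W]$). You flag this as an anticipated obstacle; the paper resolves it by defining $\beta$ with precisely these kernel conditions, after which the intrinsic Lipschitz estimate is a two-line computation. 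Second, you make property (b) harder than necessary: a blow-up that is a homogeneous \emph{subgroup} parametrized by $L:\W\to\R v_0$ must satisfy both $L(2w)=\sqrt2\,L(w)$ (dilation invariance on $V_2$) and $L(2w)=2L(w)$ (group law, using that $\R v_0$ and $\W$ commute), hence $L\equiv0$ on $\W\cap V_2$. So one only needs to exclude the \emph{zero} profile on the $w_0$-line, not every profile $c\,\mathrm{sgn}(s)\sqrt{|s|}$.

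The genuine gap is in the analytic core, which you delegate to ``standard'' facts about the Weierstrass function $f(t)=\sum_k 2^{-k}g(4^kt)$. What the argument actually requires, at \emph{every} point $\hat t$ and uniformly over all scales, is: (i) the one-sided difference quotients $\Delta(\hat t+s,\hat t)=(f(\hat t+s)-f(\hat t))/(\mathrm{sgn}(s)|s|^{1/2})$ admit at least two distinct subsequential limits on the same side as $s\to0$ (this, via continuity and an intermediate-value argument, is what produces infinitely many distinct blow-ups), and (ii) no rescaled limit vanishes identically on the line (to kill the homogeneous subgroup). Classical results on Weierstrass functions give nowhere differentiability for the first-order quotient, or oscillation lower bounds $\mathrm{osc}_{[t,t+\delta]}f\gtrsim\delta^{1/2}$, but they do not give (i) in the pointwise, same-side, scale-uniform form needed here; your proposed ``Fourier/frequency argument'' is not an argument, and the low-frequency terms of the series contribute subsequence-dependent linear corrections to the blow-up profiles that must be controlled. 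This is exactly why the paper builds a self-similar function $u$ by an explicit iterated function system: self-similarity reduces the every-point, every-scale statement~\eqref{eq_12Holder_appendix2} to a single finite case analysis on the unit interval, which is then transported to all scales by the affine maps $A_0,A_{4/9},A_{5/9}$. Until you prove the analogue of~\eqref{eq_12Holder_appendix2} for your lacunary series, the proof of (a) and (b) at every point is incomplete.
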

    
    The proof of Theorem~\ref{thm:general} is postponed in order to first provide some comments.
      
\begin{Remark}
The simplest example of a Carnot group where Theorem~\ref{thm:general} applies is $\G=\HH\times\R$, where $\HH$ is the first Heisenberg group. As customary, we consider generators $X,Y,T$ of the Lie algebra of $\HH$ such that $[X,Y]=T,[X,T]=[Y,T]=0$ and fix the exponential coordinates $(x,y,t)=\exp(xX+yY+tT)$. Using coordinates $(x,y,t,r)$ on $\HH\times\R$ with $r\in\R$, we can consider the splitting $\HH\times\R=\W\V$ given by the vertical subgroup $\W=\{x=r=0\}$ of $\HH$ and the horizontal abelian subgroup $\V=\{y=t=0\}$.
	Then $V_2\cap\W\not\subset [\W,\W]=\{0\} $ and $v_0=(0,0,0,1)$ commutes with $\W$.
	Hence, this splitting of $\HH\times\R$ satisfies the conditions of Theorem~\ref{thm:general} and it does not satisfy an intrinsic Rademacher Theorem.
        
\medskip
	
It is worth observing that, in this setting, the map $\phi:\W\to\V$ provided in the proof of Theorem~\ref{thm:general} takes the form $\phi(y,t)=(0,u(t))$, where $u$ is the $\frac 12$-H\"older continuous function constructed in the Appendix.	In particular, the intrinsic graph $\Gamma_\phi$ is the set $\{(0,y,t,u(t)):y,t\in\R\}$ and it is contained in the Abelian subgroup $\W\times\R$. One of the properties of $u$ is that the limit
\[
\lim_{s\to t}\dfrac{\vert u(t)-u(s)\vert }{\sqrt{\vert t-s\vert}}
\]
does not exists at any $t\in\R$ and this is the ultimate reason for the non-dif\-fe\-ren\-tia\-bi\-li\-ty of $\phi$.      

 \medskip

Similar counterexamples can be constructed in any codimension $k\geq 2$: in fact one can consider $\HH^{k-1}\times \R=(\R^{k-1}_x\times\R^{k-1}_y\times\R_t)\times\R_r$ with splitting $\W\V$ defined by $\W=\{x=0,r=0\},\V=\{y=0,t=0\}$. It can be easily checked that the map $\phi(y,t)=(0,u(t))$ defines an intrinsic Lipschitz graph of codimension $k$ for which the properties (a) and (b) in Theorem~\ref{thm:general} hold at every point.
\end{Remark}
      
 \begin{Remark}

The measure $\mu=\mathcal H^{d}\hel \Gamma_\phi$, where $d$ is the Hausdorff dimension of $\W$ and $\mathcal H^d$ is the $d$-dimensional Hausdorff measure, does not have a unique tangent measure at any point.
  Indeed, firstly, any tangent measure of $\mu$ is supported on a blow-up of $\Gamma_\phi$.
 Secondly, by~\cite[Theorem 3.9]{FranchiSerapioni2016Intrinsic}, $\mu$ and all its dilations are uniformly $d$-Ahlfors regular, and thus any tangent measure of $\mu$ is  $d$-Ahlfors regular. 
 We then conclude that if $\mu_1$ and $\mu_2$ are two tangent measures of $\mu$ supported on different blow-ups of $\Gamma_\phi$, then they are two distinct measures. 
 Since blow-ups of $\Gamma_\phi$ are not unique, so are tangent measures. Observe also that no tangent measure can be flat, i.e., supported on a homogeneous subgroup.  
In particular, $\Gamma_\phi$ is purely $C^1_H$-unrectifiable, i.e., $\mathcal H^d(\Gamma_\phi\cap\Sigma)=0$ for every submanifold $\Sigma$ of class $C^1_H$ (see e.g.~\cite[\S~2.5 and~6.1]{antonelli2020rectifiable}). 
\end{Remark}

\begin{Remark}
	If $\W$ is a homogeneous subgroup of $\G$ with codimension 1, then the conditions of Theorem~\ref{thm:general} cannot be met because $\bigoplus_{j=2}^sV_j = [\W,\W] + [\W,\V]$. Actually, intrinsic Lipschitz graphs of codimension 1 are boundaries of sets with finite perimeter in $\G$ (see e.g.~\cite[Theorem~1.2]{VSNS}), hence at almost every point they possess at least one blow-up which is a homogeneous subgroup of codimension 1, see~\cite{MR2496564}. Therefore, any possible counterexample to the Rademacher Theorem in codimension 1 cannot be as striking as the one provided by Theorem~\ref{thm:general}, in the sense that property (b) cannot hold on a  set with positive measure.
      \end{Remark}

\begin{Remark}
	Following the same proof strategy, one can extend Theorem~\ref{thm:general} to the case $\W\cap V_j\not\subset[\W,\W]$ for some $j>2$ and $v_0\in V_k\cap\V\setminus\{0\}$ with $k<j$ and $[v_0,\W]=0$, by taking a $k/j$-Hölder analogue of the function $u$ constructed in the appendix.
\end{Remark}
%\Anote{Are you sure the condition is exactly the same? No trick with higher order? I guess not}

\begin{proof}[Proof of Theorem~\ref{thm:general}]
	Let $\beta:\W\to\R$ be a nonzero linear function such that  $\W\cap V_j\subset\ker\beta$ whenever $j\neq2$ and $[\W,\W]\subset\ker\beta$; such a $\beta$ exists\footnote{For instance, one can consider $\beta(x)=\langle x,w_0 \rangle$ for some $w_0\in(\W\cap V_2)\setminus[\W,\W]$ and a scalar product on $\W$ adapted to the grading $\bigoplus_{j=1}^s \W\cap V_j$ of $\W$.} because $\W\cap V_2\not\subset[\W,\W]$.
	Notice that such a function $\beta$ is in fact a group morphism $\W\to\R$.

        \medskip

        Consider a 1/2-H\"older continuous function $u:\R\to\R$ with the following properties. First, the difference quotients 
\[
\Delta(s,t)=\frac{u(s)-u(t)}{\text{sgn}(s-t)\vert s-t\vert^{1/2}}
\]
are bounded, namely,
\begin{equation}\label{eq_12Holder1}
|\Delta(s,t)|\leq 1\qquad\text{for every $s,t\in\R$.}
\end{equation}
	Second, there exist $c_1>0$ and $c_2>0$ such that, for every $t_0\in\R$ and $\delta\in(0,1\,]$, there exist $s_1,s_2\in\R$ such that
	\begin{equation}\label{eq_12Holder2}
	\begin{array}{l}
	\text{sgn}(s_1-t_0)=\text{sgn}(s_2-t_0)\\
	c_1\delta\leq|s_1-t_0|\leq \delta\\
	c_1\delta\leq|s_2-t_0|\leq \delta\\
	|\Delta(s_1,t_0)-\Delta(s_2,t_0)|\geq c_2.
	\end{array}
	\end{equation}
	Such a function exists, as we show in the appendix.

        \medskip
	
	We can then define $\phi:\W\to\V$ as
	\[
	\phi(w) = u(\beta(w)) v_0 .
	\]
	Notice that the condition $[v_0,\W]=0$ implies that 
		\begin{equation}\label{eq_Abelian}
	vw=wv\qquad\text{for all $w\in\W$ and $v\in\R v_0$.}
		\end{equation}
	Therefore, the intrinsic graph of $\phi$ is the set of points $w\phi(w) = w + u(\beta(w)) v_0$ for $w\in\W$.

        \medskip
        
	{\bf Claim 1:} The map $\phi$ is intrinsic Lipschitz.\\
	Fix a homogeneous norm $\|\cdot\|$ on $\G$.
	Notice that, since $\beta(\delta_\lambda x) = \lambda^2\beta(x)$ for all $x\in\W$,  there is a constant $C$ such that $\vert \beta(x) \vert \le C \|x\|^2 $, for all $x\in\W$.
	We check that $\Gamma_\phi$ has the cone property for the cone
	\[
	  U = \{ wv: w\in\W,\ v\in\V, \|v\| > 2\sqrt{C}\|v_0\| \|w\|  \} .
	\]
	Given $\hat w, w\in\W$, by~\eqref{eq_Abelian} we have
	$(\hat w\phi(\hat w))^{-1}(w\phi(w)) = (\hat w^{-1}w)(\phi(\hat w)^{-1}\phi(w))$ and
	\begin{align*}
	\| \phi(\hat w)^{-1}\phi(w) \| 
	&= \vert u(\beta(w)) - u(\beta(\hat w)) \vert \|v_0\| 
	\le \vert \beta(w)-\beta(\hat w) \vert^{1/2} \|v_0\|  \\
	&= \vert \beta(\hat w^{-1}w) \vert^{1/2} \|v_0\| 
	\le \sqrt{C} \|\hat w^{-1}w\| \|v_0\|  .
	\end{align*}
	Thus, $ (\hat w\phi(\hat w))^{-1}\Gamma_\phi \cap U = \emptyset$ for all $\hat w\in\W$, i.e., $\Gamma_\phi$ is an intrinsic Lipschitz graph.
	
\medskip
	
	{\bf Claim 2:} for  $p\in\Gamma_\phi$, none of the blow-ups of $\Gamma_\phi$ at $p$ is a homogeneous subgroup.\\
We first observe that, if $\V_0\subset \V\cap V_1$ is the horizontal subgroup generated by $v_0$ and $L:\W\to\V_0$	parametrizes a homogeneous subgroup $\Gamma_L$ of $\G$, then $L|_{\W\cap V_2}=0$. Indeed, the homogeneity of $\Gamma_L$ implies that for every $h>0$ and $w\in\W\cap V_2$ one has $L(2w)=\sqrt 2\,L(w)$, while the fact that $\Gamma_L$ is a subgroup  (plus the fact that $\V_0$ and $\W$ commute) gives $L(2 w)=2L(w)$. This proves that $L=0$ on $\W\cap V_2$.
	
\medskip

		We now prove the claim. Assume by contradiction that there exist $\hat p=\hat w\phi(\hat w)\in\Gamma_\phi$, a map $L:\W\to\V$ such that the intrinsic graph $\Gamma_L$ of $L$ is a homogeneous subgroup and a sequence $(\lambda_n)_n$ with $\lambda_n\to+\infty$,  and 
		\[
\lim_{n\to\infty} \delta_{\lambda_n}( \hat p^{-1}\Gamma_\phi)=\Gamma_L.
\]
Observe that for every $w\in\W$ and every $n$
	\begin{align*}
	\delta_{\lambda_n}((\hat w\phi(\hat w))^{-1} ( w\phi( w) ) )
	&= \delta_{\lambda_n}(\hat w^{-1} w \phi(\hat w)^{-1}\phi( w) ) \\
	&= \delta_{\lambda_n}(\hat w^{-1} w) \left( \frac{ u(\beta(w)) - u(\beta(\hat w)) }{ 1/{\lambda_n}} v_0 \right).
	\end{align*}
	If we set $w=\hat w\delta_{1/{\lambda_n}}w'$, then $\beta(w) = \beta(\hat w) + \beta(w')/\lambda_n^2$.
	Therefore, the set $\delta_{\lambda_n}(\hat p^{-1}\Gamma_\phi)$ is the intrinsic graph of the function from $\W$ to $\V$ given by
	\[
	\phi_{\hat p,\lambda_n}(w') = \frac{ u(\beta(\hat w)+\beta(w')/\lambda_n^2) - u(\beta(\hat w)) }{ 1/\lambda_n } v_0 .
	\]
	Since the maps $\phi_{\hat p,\lambda_n}$ take values in $\V_0$, $L$ is also $\V_0$-valued and, as we saw above, this implies that $L|_{\W\cap V_2}=0$.

\medskip

Write $\hat t=\beta(\hat w)$ and let $w_0\in\W\cap V_2$ be such that $\beta(w_0)=1$; then for every $h\in\R$
\begin{equation}\label{eq_blowup}
\phi_{\hat p,\lambda_n}(hw_0)=(\text{sgn }h)|h|^{1/2}\Delta(\hat t+h/\lambda_n^2,\hat t\,)v_0.
\end{equation}
By~\eqref{eq_12Holder2} there exists a sequence $(h_n)_n$ such that for every $n$
\[
\left|{h_n}\right|\in[\,c_1,1\,]\qquad\text{and}\qquad \|\phi_{\hat p,\lambda_n}(h_nw_0)\|\geq \sqrt{c_1}\:c_2\|v_0\|/2.
\]
Up to passing to a subsequence we can also assume that ${h_n}\to\bar h$ with $|\bar h|\in[\,c_1,1\,]$; since  
\begin{align*}
\|\phi_{\hat p,\lambda_n}(h_n w_0)-\phi_{\hat p,\lambda_n}(\bar h w_0)\|&=\left| \frac{u(\hat t+h_n/\lambda_n^2)-u(\hat t+\bar h/\lambda_n^2)}{1/\lambda_n}\right|\|v_0\|\\
&\leq|h_n-\bar h|^{1/2}\|v_0\|
\end{align*}
we obtain
\[
\|L(\bar h w_0)\|=\lim_n\|\phi_{\hat p,\lambda_n}(\bar h w_0)\|=\lim_n\|\phi_{\hat p,\lambda_n}(h_n w_0)\|\geq \sqrt{c_1}\:c_2\|v_0\|/2.
\]
This contradicts the fact that $L(\bar h w_0)=0$, and the claim is proved.

\medskip
	
	{\bf Claim 3:} for  $p\in\Gamma_\phi$, there exist infinitely many different blow-ups of $\Gamma_\phi$ at $p$.\\
Let $\hat p=\hat w\phi(\hat w)\in\Gamma_\phi$ be fixed and let $\hat t=\beta(\hat w)$; as before, fix also $w_0\in\W\cap V_2$ such that $\beta(w_0)=1$.		
By~\eqref{eq_12Holder2} we can find infinitesimal sequences $(s_n^1)_n,(s_n^2)_n$ such that
\[
\begin{array}{l}
\text{sgn}(s^1_n)=\text{sgn}(s^2_n)\text{ for every $n$},\vspace{.1cm}\\
\Delta(\hat t+s^1_n,\hat t\,)\geq \Delta(\hat t+s^2_n,\hat t\,) + c_2.
\end{array}
\]
Up to passing to a subsequence, we can assume that there exists $\sigma\in\{1,-1\}$ and $\Delta^1,\Delta^2\in\R$ such that
\[
\begin{array}{l}
\text{$\text{sgn}(s^1_n)=\text{sgn}(s^2_n)=\sigma$ for every $n$},\vspace{.1cm}\\
\Delta(\hat t+s^1_n,\hat t\,)\to\Delta^1\text{ and }\Delta(\hat t+s^2_n,\hat t\,)\to\Delta^2\text{ as }n\to\infty\vspace{.1cm},\\
\Delta^1\geq \Delta^2+c_2.
\end{array}  
\]
Due to the continuity of $s\mapsto \Delta(\hat t+s,\hat t\,)$ for $s\neq 0$, given $\Delta\in(\Delta^2,\Delta^1)$ one can find an infinitesimal sequence $(s_n)_n$ such that, for every $n$, $\text{sgn}(s_n)=\sigma$  and $\Delta(\hat t+s_n,\hat t\,)=\Delta$. Now, as in~\eqref{eq_blowup} the set $\delta_{|s_n|^{-1/2}}(\hat p^{-1}\Gamma_\phi)$ is the intrinsic graph of a map $\phi_{\hat p,|s_n|^{-1/2}}:\W\to\V$ such that
\[
\phi_{\hat p,|s_n|^{-1/2}}(\sigma w_0)=\sigma\Delta(\hat t+s_n,\hat t)v_0 =\sigma\Delta v_0.
\]
Since the family $(\phi_{\hat p,|s_n|^{-1/2}})_n$ is uniformly H\"older continuous, up to extracting a subsequence it converges locally uniformly to a map $\psi:\W\to\V$ such that $\psi(\sigma w_0) = \sigma \Delta v_0$. The arbitrariness of $\Delta\in(\Delta^2,\Delta^1)$ implies that there are infinitely many different blow-ups at $\hat p$, and this concludes the proof.
\end{proof}

%%%%%%%%%%%%%%%%%%%%%%%%%%%%%%%%%%%%%%%%%%%%%%%%%%%%%%%%%%%%%%%
\section*{Appendix}
We are now going to construct the function $u$ used in the proof of Theorem~\ref{thm:general}: this function, in a sense, provides a counter-example to a Rademacher property for Lipschitz functions from $(\R,\vert \cdot \vert^{1/2})$ to $(\R,\vert\cdot\vert)$. We will use a  classical procedure producing a self-similar function: although these ideas are well-known (see e.g.~\cite{Okamoto} and the references therein), we prefer to include a detailed construction because we were not able to find in the literature explicit statements for the precise estimates~\eqref{eq_12Holder_appendix2} we need.

%The function we are going to construct is not of our invention, but we can't find a suitable reference.

We construct a function $u:[\,0,1\,]\to[\,0,1\,]$ whose difference quotients 
\[
\Delta(s,t)=\frac{u(s)-u(t)}{\text{sgn}(s-t)\vert s-t\vert^{1/2}}
\]
satisfy	
\begin{equation}\label{eq_12Holder_appendix1}
|\Delta(s,t)|\leq 1\qquad\text{for every $s,t\in[\,0,1\,]$.}
\end{equation}
We will construct $u$ in such a way that there exist $c_1>0$ and $c_2>0$ with the property that, for every $t\in[\,0,1\,]$ and $\delta\in(0,1\,]$, one can find $s_1,s_2\in[\,0,1\,]$ such that
	\begin{equation}\label{eq_12Holder_appendix2}
	\begin{array}{l}
	\text{sgn}(s_1-t)=\text{sgn}(s_2-t),\\
	c_1\delta\leq|s_1-t|\leq \delta,\\
	c_1\delta\leq|s_2-t|\leq \delta,\\
	|\Delta(s_1,t)-\Delta(s_2,t)|\geq c_2.
	\end{array}
	\end{equation}
We can extend $u$ to $\R$ by setting $u(t)=u(-t)$ for $t\in[\,-1,0\,]$ and $u(t+2n)=u(t)$ for all $n\in\bb Z$. The properties~\eqref{eq_12Holder_appendix1} and~\eqref{eq_12Holder_appendix2} imply the validity of~\eqref{eq_12Holder1} and~\eqref{eq_12Holder2} for the extended $u$.

 The function $u$ is obtained as the limit of a sequence $(u_n)_{n\in\mathbb N}$ where $u_0(t)=t$. The function $u_{n+1}$ is obtained from $u_n$ on setting
\begin{equation}\label{eq5fa5a8ca}
  u_{n+1}(t) = \begin{cases}
    \frac{2}{3}u_n\big(\frac{9}{4}t\big) &\text{ if } t\in \big [\,0,\frac{4}{9}\,\big],\\
    \frac{2}{3} -\frac{1}{3} u_n\big(9\big(t-\frac{4}{9}\big)\big) &\text{ if } t\in \big[\,\frac{4}{9},\frac{5}{9}\,\big],\\
    \frac{1}{3} + \frac{2}{3}u_n\big(\frac{9}{4}\big(t-\frac{5}{9}\big)\big) &\text{ if } t\in \big[\,\frac{5}{9},1\,\big].    
    \end{cases}
\end{equation}
The first few of the functions $u_0,u_1,u_2,\dots$ are plotted in Figure~\ref{fig5fa5a8a1}. Let us  notice that $u_n(0)=0$ and $u_n(1)=1$ for every $n$, hence $u_n(4/9)=2/3$ and $u_n(5/9)=1/3$ for every $n\geq 1$.

\begin{figure}
\includegraphics[width=\textwidth]{./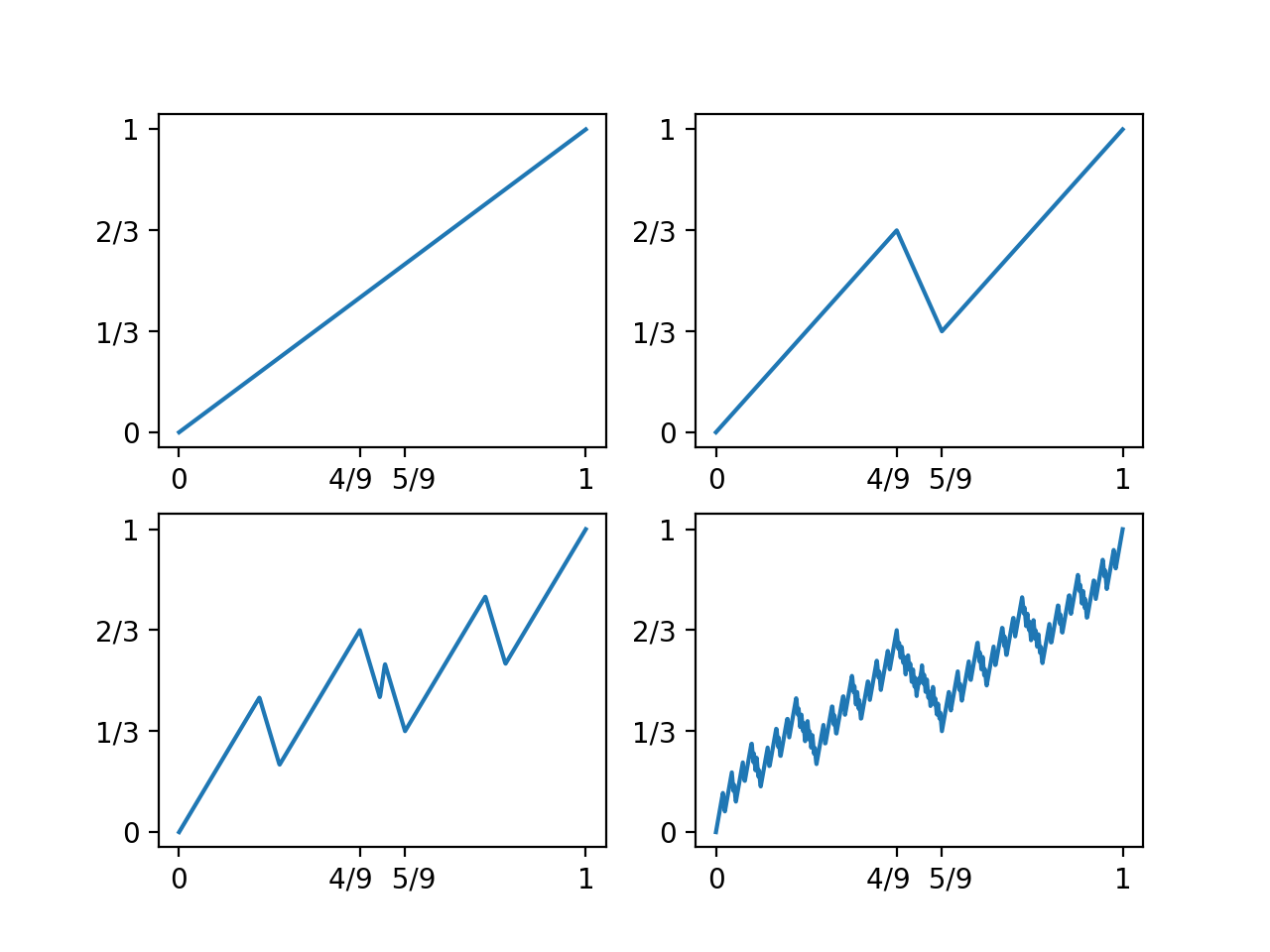}
\caption{Four instances of the functions $u_n$ defined in~\eqref{eq5fa5a8ca}}
\label{fig5fa5a8a1}
\end{figure}

Notice (see Figure~\ref{fig5fa5a890}) that the graph of $u_{n+1}$ is the union of three affine copies of the graph of $u_n$, via the following maps (acting on $p\in \R^2$):
\begin{equation}\label{eq5fa3b9c2}
	\begin{aligned}
	A_0(p)
		&= \begin{pmatrix}4/9 & 0 \\ 0 & 2/3\end{pmatrix} p,\\%\begin{pmatrix}x\\y\end{pmatrix} , \\
	A_{4/9}(p) 
        &= \begin{pmatrix}1/9&0\\0&-1/3\end{pmatrix}p%\begin{pmatrix}x\\y\end{pmatrix}
        + \begin{pmatrix}4/9\\2/3\end{pmatrix} , \\
	A_{5/9}(p)
        &= \begin{pmatrix}4/9 & 0 \\ 0 & 2/3\end{pmatrix}p %\begin{pmatrix}x\\y\end{pmatrix}
        + \begin{pmatrix}5/9\\1/3\end{pmatrix} .
	\end{aligned}
\end{equation}	
	\begin{figure}
\includegraphics[width=\textwidth]{./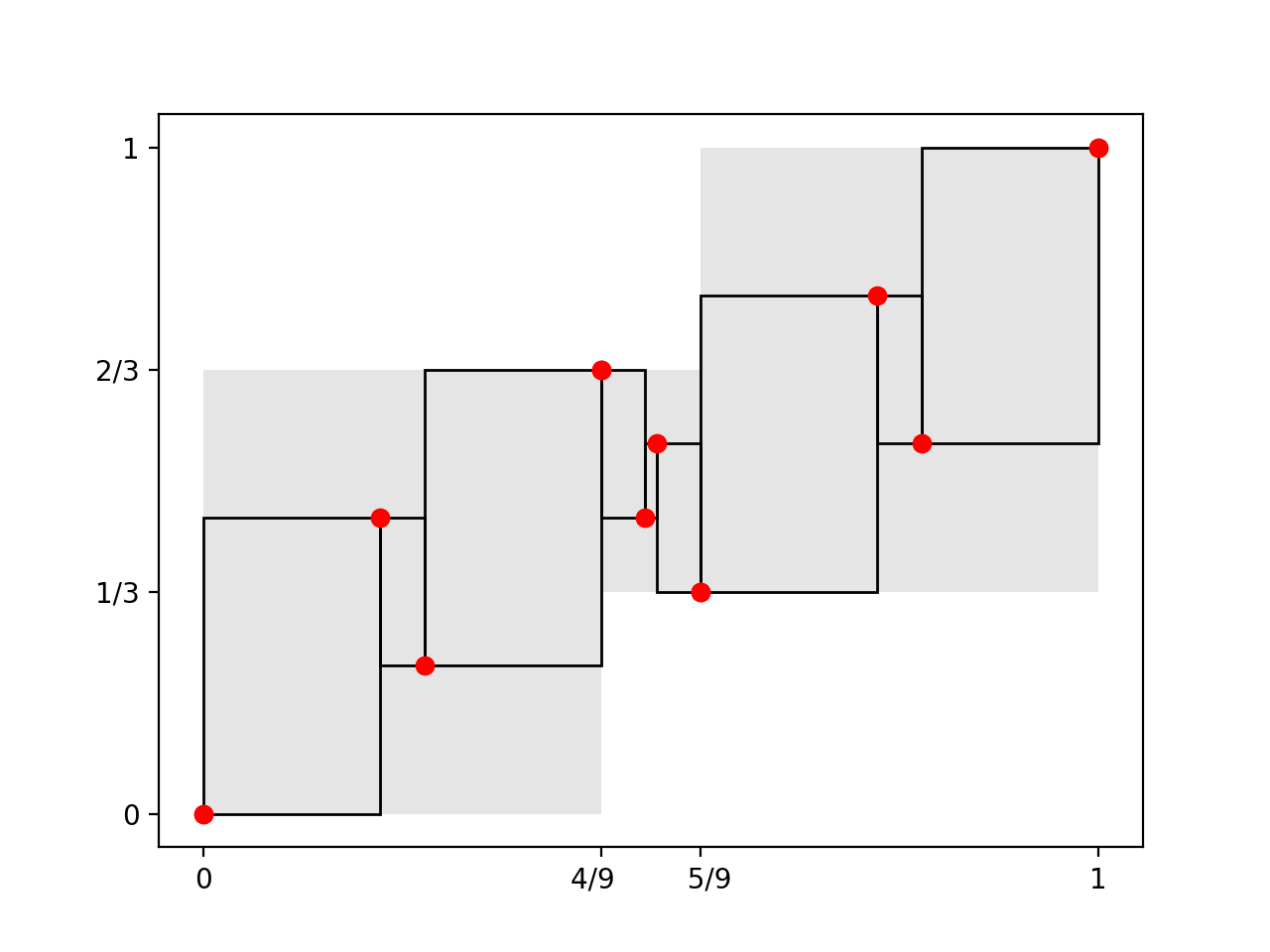}
\caption{Iterated images of the unit square under the affine maps in~\eqref{eq5fa3b9c2}; red dots are the images of $(0,0)$ and $(1,1)$, and they belong to the graph of the limit function $u$.}
\label{fig5fa5a890}
\end{figure}

{\bf Claim 1:}
   The functions $u_n$ 
   converge uniformly on $[\,0,1\,]$
   to a function $u$  for which~\eqref{eq_12Holder_appendix1} holds.

%   {\it Proof of the claim.}
   
   The fact that $u_n$ uniformly converge to a continuous function $u$ is a consequence of the estimate 
   \begin{equation*}%\label{eq5fb82403}
   \|u_{n+1}-u_{n}\|_{C^0([\,0,1\,])} \le \frac23 \|u_{n}-u_{n-1}\|_{C^0([\,0,1\,])} .
   \end{equation*}
   This estimate follows directly from the definition~\eqref{eq5fa5a8ca}: for instance, for $t\in[\,0,4/9\,]$ one has
   \[
   \vert u_{n+1}(t)-u_{n}(t) \vert
   = \frac{2}{3} \vert u_n\big(9t/4\big) - u_{n-1}\big(9t/4\big) \vert
   \le \frac{2}{3} \|u_{n}-u_{n-1}\|_{C^0([\,0,1\,])} .
   \]
   Similarly, one can treat the other two cases $t\in[\,4/9,5/9\,]$ and $t\in[\,5/9,1\,]$.
   
%   %It is clear the $u_n$ are piecewise affine.
%   %Moreover,
%   If $p\in \R^2$ is the image of
%   % notice that all the iterated images of the points
%   either $(0,0)$ or $(1,1)$
%   by a finite iteration of the maps listed in~\eqref{eq5fa3b9c2}
%   then, if $n$ is larger or equal to the number of iterations, $p$ belongs to the graph of $u_n$.
%   Since the projection to the $x$ axis of these points form a dense subset of $[0,1]$, it is clear that the functions $u_n$ converge on a dense set. It suffices to prove that they are all $1/2$-H\"older continuous with constant $1$ to reach the conclusion of the claim.

   \medskip
   
 The bound~\eqref{eq_12Holder_appendix1} on the the difference quotients of $u$ follows from the fact that the same is true for all $u_n$ in the sequence, as we are now going to prove by induction on $n$.   
   The statement is clearly true for $n=0$. % $u_0$ is $1/2$-H\"older continuous with constant $1$.
   Suppose that $u_n$ satisfies
   \[
   \text{ $ \vert u_{n}(t)-u_{n}(s)\vert \leq \vert t-s\vert^{1/2}$ for every $s,t\in[\,0,1\,]$;}
   \]
    we will prove that also $\vert u_{n+1}(t)-u_{n+1}(s)\vert \leq \vert t-s\vert^{1/2}$ for every $s,t\in[\,0,1\,]$. We distinguish several cases depending on which intervals ($[\,0,4/9\,]$, $[\,4/9,5/9\,]$ or $[\,5/9,1\,]$) the points $s$ and $t$ belong to. We can suppose that $s<t$.

{\bf Case 1:} $s$ and $t$ are in the same interval. We can use~\eqref{eq5fa5a8ca} and the induction hypothesis to conclude.

{\bf Case 2:} $s\in [\,0,4/9\,]$ and $t\in[\,4/9,5/9\,]$. 
Since $0\le u_n\le 1$, one sees from the definition of $u_{n+1}$ that 
$\max(u_{n+1}(s),u_{n+1}(t))\leq 2/3 = u_{n+1}(4/9) $. Thus
\begin{align*}
  \vert u_{n+1}(t)-u_{n+1}(s) \vert &\leq \max(u_{n+1}(4/9)-u_{n+1}(t), u_{n+1}(4/9)-u_{n+1}(s))\\
  &\leq \max((t-4/9)^{1/2}, (4/9-s)^{1/2}) \leq (t-s)^{1/2},
\end{align*}
 where the second inequality follows frow Case 1.

{\bf Case 3:} $s\in [\,4/9,5/9\,]$ and $t\in[\,5/9,1\,]$. This is similar to Case 2.

{\bf Case 4:} $s\in [\,0,4/9\,]$ and $t\in[\,5/9,1\,]$. Then either $\vert u_{n+1}(t)-u_{n+1}(s) \vert \leq 1/3$, and we are done because $\vert t-s\vert \geq 1/9$, or $\vert u_{n+1}(t)-u_{n+1}(s) \vert > 1/3$, and then necessarily  $u_{n+1}(s) < u_{n+1}(t)$ (otherwise, $0\leq u_{n+1}(s)-u_{n+1}(t)\leq u_{n+1}(4/9)-u_{n+1}(5/9)=2/3-1/3=1/3$)  and
\begin{align*}
  \vert u_{n+1}(t)-u_{n+1}(s) \vert &= u_{n+1}(t)-u_{n+1}(s) \\
                                    &= u_{n+1}(t)- u_{n+1}(5/9) -1/3 + u_{n+1}(4/9) -u_{n+1}(s)\\
                                    &\leq (t-5/9)^{1/2} -1/3 + (4/9-s)^{1/2}
\end{align*}
(by Case 1). By squaring the right hand side of the last inequality, we obtain

\begin{align*}
  \big ((t-5/9)^{1/2} -1/3 + (4/9-s)^{1/2}\big )^2 \hspace{-5cm}&\\
                                                                &= (t-s) + 2 (t-5/9)^{1/2} (4/9-s)^{1/2} -\frac{2}{3}(t-5/9)^{1/2} -\frac{2}{3}(4/9-s)^{1/2}\\
                                                                &= (t-s) +  (t-5/9)^{1/2} \big((4/9-s)^{1/2} -2/3 \big) + (4/9-s)^{1/2}\big ((t-5/9)^{1/2} -2/3\big) \\
  &\leq t-s,
\end{align*}
where we used the fact that  $4/9-s \leq 4/9$ and $t-5/9\leq 4/9$. This is enough to conclude.

{\bf Claim 2:} there exist $d_1>0$ and $d_2>0$ such that, for every $t_0\in[\,0,1\,]$, one can find $s_1,s_2\in[\,0,1\,]$ such that
\begin{equation}\label{eq_claim2d1d2}
	\begin{array}{l}
	\text{sgn}(s_1-t_0)=\text{sgn}(s_2-t_0)\\
	d_1\leq|s_1-t_0|\leq 1\\
	d_1\leq|s_2-t_0|\leq 1\\
	|\Delta(s_1,t_0)-\Delta(s_2,t_0)|\geq d_2.
	\end{array}
\end{equation}
In fact, we will prove Claim 2 for $d_1=1/18$ and
\[
d_2= \min\left\{ \frac13\left( \left(1-\tfrac 4{81}\right)^{-1/2} -1\right),\ \frac 79-\frac 35,\ \frac 1{\sqrt 5}\right\}.
\]
We distinguish several cases.

{\bf Case 1: $t_0\in[\,0,4/9\,]$.} In this case it suffices to consider $s_1=5/9$ and $s_2=1$, as we now show. Observe that the distances of $s_1,s_2$ from $t_0$ are both greater than $1/9>d_1$.\\
If $u(t_0)\geq 2/9$, by~\eqref{eq_12Holder_appendix1} and the equality $u(0)=0$ we have $t_0^{1/2}\geq u(t_0)$, hence $t_0\geq 4/81$; since $u(t_0)\leq 2/3$ we obtain
\[
\Delta(1,t_0)\geq \frac{\frac 13}{\sqrt{1-\tfrac 4{81}}}\qquad\text{and}\qquad
\Delta(5/9,t_0)\leq\frac{\tfrac13-\tfrac29}{\sqrt{\tfrac19}}=\frac13,
\]
so that $\Delta(1,t_0)-\Delta(5/9,t_0)\geq d_2$.\\
If $u(t_0)\leq 2/9$, then $(4/9-t_0)^{1/2}\geq 2/3- 2/9=4/9$, hence $5/9-t_0\geq1/9+(4/9)^2=(5/9)^2$ and
\[
\Delta(1,t_0)\geq  \frac 79 \qquad\text{and}\qquad \Delta(5/9,t_0)\leq\frac{\;\tfrac 13\;}{\frac59}=\frac 35
\]
and again $\Delta(1,t_0)-\Delta(5/9,t_0)\geq d_2$.

{\bf Case 2: $t_0\in[\,4/9,1/2\,]$.}
In this case we take $s_1=5/9$ and $s_2=1$. The distances of $s_1,s_2$ from $t_0$ are both no less than $1/18=d_1$ and, since $1/3\leq u(t_0)\leq2/3$, one gets
\[
\Delta(1,t_0)\geq \frac{\frac13}{\sqrt{\frac 59}}=\frac1{\sqrt 5} \qquad\text{and}\qquad \Delta(5/9,t_0)\leq 0.
\]

{\bf Case 3: $t_0\in[\,1/2,1\,]$.} We proved that, if $t_0\in[\,0,1/2\,]$, the claim can be proved on choosing $s_1=5/9$ and $s_2=1$. Therefore, due to the symmetry $u(x)=1-u(1-x)$, when $t_0\in[\,1/2,1\,]$ it is enough to take $s_1=0$ and $s_2=4/9$.

\medskip

{\bf Claim 3:} there exist $c_1>0$ and $c_2>0$ such that, for every $t\in[\,0,1\,]$ and $\delta\in(0,1\,]$, one can find $s_1,s_2\in[\,0,1\,]$ for which~\eqref{eq_12Holder_appendix2} holds.

By self-similarity, the graph of $u$  over the interval $ [\,0,1\,]\cap [\,t-\delta,t+\delta\,]$ contains the image of the graph of $u$ over $[\,0,1\,]$ under an affine map $L:\R^2\to\R^2$ which is a finite composition $L=A_{j_1}\circ\dots\circ A_{j_N}$ of maps $(A_{j_k})_{k=1,\dots,N}$ for $j_k$ in $\{0,4/9,5/9\}$. Observe that $L$ is an affine map of the form $L(x,y)=(L_1(x),L_2(y))$ for suitable affine maps $L_1,L_2:\R\to\R$ and it is not restrictive to assume that the length of the interval $L_1([\,0,1\,])$, which is contained in $[\,t-\delta,t+\delta\,]$, is at least $\delta/9$: this implies that there exists $\delta/9\leq c\leq \delta$ such that $|L_1(x)-L_1(y)|=c|x-y|$ for every $x,y\in [\,0,1\,]$. Let $t_0\in[\,0,1\,]$ be such that  $L(t_0, u(t_0))=(t,u(t))$. If $s_1,s_2\in[\,0,1\,]$ are such that~\eqref{eq_claim2d1d2} holds, then we have
\[
\begin{array}{l}
\text{sgn}(L_1(s_1)-t)=\text{sgn}(L_1(s_2)-t),\vspace{.2cm}\\
	\tfrac{d_1}{9}\delta\leq|L_1(s_1)-t|\leq \delta,\vspace{.2cm}\\
	\tfrac{d_1}{9}\delta\leq|L_1(s_2)-t|\leq \delta.
%	|\Delta(s_1,t_0)-\Delta(s_2,t_0)|\geq d_2.
	\end{array}
\]
Since the maps $A_{j_k}$ do not modify the difference quotients, $L$ also has this property, i.e.,
\[
|\Delta(L_1(s_1),t)-\Delta(L_1(s_2),t)|=|\Delta(s_1,t_0)-\Delta(s_2,t_0)|\geq d_2.
\]
This concludes the proof.

\bibliographystyle{abbrv}
\bibliography{RefsCarnot-arxiv-2021-01-08}

\end{document}